\documentclass[runningheads]{llncs} 
\usepackage{graphicx} 
\usepackage{amsmath}
\usepackage{amssymb}
\usepackage{amsfonts}
\usepackage{todonotes}

\title{Minimality, transitivity and sensitivity of non-uniform cellular automata\thanks{This work was partially supported by the Academy of Finland grant 354965
and by the
HORIZON-MSCA-2022-SE-01 project 101131549 
(ACANCOS).}}
\author{Supreeti Kamilya\inst{1} \and Jarkko Kari\inst{2} \and Katariina Paturi\inst{2}}
\authorrunning{S. Kamilya, J. Kari and K. Paturi}
\institute{Department of Computer Science and Engineering, Birla Institute of Technology, Mesra, Ranchi, India \and Department of Mathematics and Statistics, University of Turku, Finland}

\newcommand{\N}{\mathbb{N}}
\newcommand{\Z}{\mathbb{Z}}
\newcommand{\R}{\mathcal{R}}
\newcommand{\cyl}{\mathrm{Cyl}}

\begin{document}

\maketitle

\begin{abstract}
Every transitive cellular automaton (CA) is sensitive to initial conditions. We study this implication in the more general context of non-uniform cellular automata (NUCA) with finitely many different local update rules assigned to cells. We construct a two-dimensional NUCA that is minimal -- and hence transitive -- but that is not sensitive to initial conditions. The construction is based on an odometer NUCA on $\{0,1,2\}^\N$ which is nearly uniform in the sense that only the first cell uses a different local rule. Then we show that if the assignment of local rules in the cells is recurrent then transitivity implies sensitivity.

\keywords{Non-uniform cellular automata, odometer, adding machine, minimality, sensitivity, transitivity}
\end{abstract}

\section{Introduction}

Classical cellular automata (CA) are homogeneous both in space and time. All cells use the same local update rule, and the same rule is used at all discrete time steps. A non-uniform cellular automaton (NUCA) is a generalization where different cells may use different update rules. However, in our formalism, there are only finitely many different rules available, and the assignment of rules -- called the rule distribution -- does not change over time. Non-uniformity greatly changes possible behaviors of the system. While every CA has temporally periodic points, there are NUCAs -- even reversible equicontinuous ones --  that do not have any temporally periodic points~\cite{KamilyaK21}. Also the classical Garden of Eden theorem~\cite{Moore1962,Myhill1963} of cellular automata that links injectivity and surjectivity does not hold for all NUCAs. However, if the local rule distribution is uniformly recurrent, or recurrent in the one-dimensional case, the Garden of Eden theorem holds~\cite{PaturiNACO}. A one-dimensional CA is either sensitive or almost equicontinuous~\cite{kurka1997}, but this dichotomy breaks for one-dimensional NUCA~\cite{PaturiUCNC}. 

In this work we investigate the relationship of transitivity and sensitivity in NUCA. If a uniform cellular automaton is transitive then it is also sensitive~\cite{CodenottiMargara96}. We show that this implication holds also among NUCA with a recurrent rule distribution. On the other hand, we construct a two-dimensional NUCA (with a non-recurrent rule distribution) that is minimal -- and hence transitive -- but equicontinuous -- and hence not sensitive. In the construction we use a one-way cellular automaton with an odometer behavior. 

\section{Preliminaries}

To define NUCA, we first give some preliminary definitions.

\begin{definition}
Let $\Sigma$ be a finite set with at least two elements, 
called a \emph{state set}, and let $d\in \Z_+$. A \emph{configuration} is an element $c \in \Sigma^{\Z^d}$. A \emph{cell} is an element $\vec{x} \in {Z^d}$.  A \emph{domain} is any subset $D \subseteq \Z^d$.
If $D$ is finite, we call it a \emph{finite domain}. An element $p \in \Sigma^D$ is called a \emph{pattern}, and if $D$ is finite, the pattern $p$ is called a \emph{finite pattern}. The pattern $c_{|D} \in \Sigma^D$ is one such that $c(\vec{x}) = c_{|D}(\vec{x})$ for all $\vec{x} \in D$. 
\end{definition}

\noindent
Next we define shifting of patterns.

\begin{definition}
Let $\vec{x}\in \Z^d$ and let $P= \bigcup_{D \subseteq \Z^d} \Sigma^D$ be the set of patterns with state set $\Sigma$ and dimension $d$. The \emph{($d$-dimensional) $\vec{x}$-shift} is the function $\sigma_{\vec{x}}:P\rightarrow P$ such that for any $D \subseteq \Z^d$ and $p \in \Sigma^D$, the image $\sigma_{\vec{x}} (p) \in \Sigma^{D-\vec{x}}$ is such that for all $\vec{y} \in D-\vec{x}$,
\begin{align*}
    \sigma_{\vec{x}} (p) (\vec{y}) =  p(\vec{y}+\vec{x}).
\end{align*}
\end{definition}

\noindent
In particular, the $\vec{x}$-shift $\sigma_{\vec{x}}(c)$ of a configuration $c \in \Sigma^{\Z^d}$ is a configuration.

\begin{definition}
Let $X \subseteq \Z^d$, $c \in \Sigma^X$ and $D \subseteq X$ be a finite domain. The set
\begin{align*}
    \cyl (c,D) = \{e \in \Sigma^X\ |\ e_{|D}=c_{|D} \}
\end{align*}
is called a \emph{cylinder}.
\end{definition}

For any $X\subseteq\Z^d$, the space $\Sigma^X$  
comes equipped with a metrizable and compact topology. Cylinders form a basis for this topology, that is, any open set is a union of cylinders.

We define NUCA on arbitrary subsets $X$ of $\Z^d$, but generally we are only interested in NUCA on infinite subsets of $\Z^d$, and in particular on $\Z^d$ itself. The odometer NUCA we investigate in Section~\ref{sec:odometer} is defined on $X=\N\subseteq \Z$.

\begin{definition}
    Let $\Sigma$ be a state set and $d \in \Z_+$. Let $N = \{\vec{n}_1, \ldots, \vec{n}_m\}\subseteq \Z^d$ be a finite domain, where $m \in \Z_+$. A \emph{local rule} with neighborhood $N$ is a function $f:\Sigma^N \rightarrow \Sigma$. A finite set of local rules $\R$ is called a \emph{rule set}. 

    Let $X \subseteq \Z^d$ and $\theta \in \R^X$. For any $\vec{x} \in X$, we denote  $N_\theta(\vec{x}) = N_{\theta(\vec{x})} + \vec{x}$, where $N_{\theta(\vec{x})}$ is the neighborhood of the local rule $\theta(\vec{x})$. If for all $\vec{x} \in X$ it holds that $N_\theta (\vec{x}) \subseteq X$, we call $\theta$ a \emph{subset rule distribution}. If $X = \Z^d$, we call $\theta$ a \emph{rule distribution}. 
\end{definition}

\noindent
The requirement $N_\theta (\vec{x}) \subseteq X$ simply means that the neighbors of all cells in $X$ are also in $X$.

\begin{definition}
    A \emph{subset non-uniform cellular automaton} (or \emph{subset NUCA}) on $\Z^d$ is a tuple $A=(\Sigma,d,X,\R,\theta)$, where $\Sigma$ is a state set, $d\in \Z_+$ is a dimension, $X\subseteq \Z^d$ is a domain, $\R$ is a rule set where any rule $f \in \R$ has neighborhood $N_f \subseteq \Z^d$, and $\theta \in \R^X$ is a subset rule distribution. The \emph{global update rule} of $A$ is the function $H_\theta:\Sigma^X \rightarrow \Sigma^X$ such that
    \begin{align*}
        H_\theta (c)(\vec{x}) = \theta (\vec{x}) (\sigma_{\vec{x}}(c_{|N_\theta (\vec{x})})).
    \end{align*}
    If $X=\Z^d$, $A$ is a \emph{non-uniform cellular automaton} (or \emph{NUCA}), and if additionally $|\R| = 1$, the NUCA is a \emph{uniform cellular automaton} (or \emph{CA}).
\end{definition}

\noindent
We generally equate a NUCA to its global update rule. We remark that a subset non-uniform cellular automaton $H_\theta:\Sigma^X \rightarrow \Sigma^X$ is a continuous transformation of the compact metric space $\Sigma^X$, and thus the pair $(\Sigma^X,H_\theta)$ is a discrete time topological dynamical system.

Next, we define (spatially) recurrent rule distributions on $\Z^d$.

\begin{definition}
    Let $d\in \Z_+$ and let $\theta \in \R^{\Z^d}$ be a rule distribution. If for every finite domain $D \subseteq \Z^d$ there exists $\vec{x}\neq\vec{0}$ such that $\sigma_{\vec{x}}(\theta)_{|D}=\theta_{|D}$, then $\theta$ is \emph{recurrent}. 
\end{definition}

Finally, we give definitions for a few dynamical properties. These are direct translations of
the corresponding standard properties on general discrete time topological dynamical systems to our setting. We start with sensitivity and equicontinuity.

\begin{definition}
    Let $\Sigma$ be a state set, $d\in \Z_+$, $X \subseteq \Z^d$, $\R$ a rule set and $\theta \in \R^X$ a subset rule distribution. The subset NUCA $H_\theta$ is 
    \begin{itemize}
        \item \emph{sensitive to initial conditions} (or \emph{sensitive}) if there is some finite domain $E \subseteq X$ such that for any $c \in \Sigma^X$ and any finite domain $D \subseteq X$, there is some $e \in \cyl (c,D)$ such that for some $r \in \N$ we have $H_\theta^r (e) \notin \cyl (H_\theta^r (c),E)$.
        \item equicontinuous if for every finite domain $E \subseteq X$ there is a finite domain $D \subseteq X$ such that for any $c,e \in \Sigma^X$, if $e \in \cyl (c,D)$ then for all $r \in \N$ we have $H_\theta^r (e) \in \cyl (H_\theta^r (c),E)$.
    \end{itemize}
\end{definition}

\noindent
Equicontinuous systems are stable while sensitive ones are unstable. Next we define several variants of transitivity. We give the definitions for general dynamical systems.

\begin{definition}
    Let $S$ be a metric space and $F:S \rightarrow S$ a continuous map.  For any non-empty open sets $U,V \subseteq S$, denote 
    \begin{align*}
        A_{U,V} = \{t \in \N\ |\ F^t(U) \cap V \neq \emptyset \}.
    \end{align*}
    The map $F$ is
    \begin{itemize}
        \item \emph{transitive}, if $A_{U,V} \neq \emptyset$,
        \item \emph{syndetically transitive} if there exists $n\in \Z_+$ such that for any $t \in A_{U,V}$, there is $t' \in A_{U,V}$ with $t < t' \leq t+n$ (in other words, the set $A_{U,V}$ is syndetic),
        \item \emph{strongly transitive}, if $\bigcup_{t \in \N} F^t (U) = S$,
        \item \emph{totally transitive}, if for all $n \in \N$, $F^n$ is transitive,
        \item \emph{weakly mixing}, if the map $F \times F: S^2 \rightarrow S^2$, such that $(F \times F) (x,y) = (F(x), F(y))$ for all $x,y \in S$, is transitive,
        \item \emph{mixing}, if $\N \setminus A_{U,V}$ is finite, 
    \end{itemize}
     for any non-empty open $U,V \subseteq S$.
\end{definition}

\noindent
In the case $F$ is a subset NUCA it is sufficient to consider sets $A_{U,V}$ 
for arbitrary cylinders. 

\begin{definition}
    Let $S$ be a compact metric space and $F:S \rightarrow S$ a continuous map. For any $x \in S$, let $\mathrm{orb}(x) = \{F^t(x)\ |\ t\in \N\}$ be the orbit of $x$. The map $F$ is \emph{minimal} if for all $x\in S$, $\mathrm{orb}(x)$ is dense in $S$.
\end{definition}

For any discrete time dynamical system,  we have the trivial relations that a mixing system is weakly mixing, syndetically transitive and totally transitive, and a system is transitive if it is minimal, weakly mixing, strongly transitive, syndetically transitive, or totally transitive. In uniform CA, transitivity, total transitivity, weakly mixing and syndetic transitivity are mutually equivalent \cite{Moothathu2005}. It is an open problem whether all transitive or strongly transitive uniform CA are mixing. 

The following example NUCA is strongly transitive (and hence transitive) but not totally transitive or weakly mixing (and hence not mixing). It is also syndetically transitive.

\begin{example}
\label{ex:firstexample}
    Let $\Sigma = \{0,1 \}$ and $N = \{-1,0,1\} \subseteq \Z$. Let $\R=\{f_\rightarrow, f_\leftarrow, g\}$ be a set of local rules $ \Sigma^N \rightarrow \Sigma$  where
    $$
    \begin{array}{rcll}
        f_\rightarrow(p) &=& p(-1) &\mbox{(the right shift)}\\
        f_\leftarrow(p) &=& p(1) &\mbox{(the left shift)}\\
        g(p) &=& p(0) \oplus 1 \hspace*{5mm}&\mbox{(the toggle)} 
    \end{array}
    $$
    where $p \in \Sigma^N$ and $\oplus$ is the modulo 2 sum. Let $\theta \in \R^\Z$ be a rule distribution where
    \begin{align*}
        \theta(x) = \begin{cases}
            f_\rightarrow, &\text{if } x < 0,\\
            g, &\text{if } x = 0,\\
            f_\leftarrow, &\text{if } x > 0,
        \end{cases}
    \end{align*}
    where $x \in \Z$. The NUCA $H_\theta$ is strongly transitive (and hence transitive), but neither totally transitive nor weakly mixing (and hence not mixing).
    
    First to show that $H_\theta$ is strongly transitive. Let $c,e \in \Sigma^\Z$ be any two configurations and $D \subseteq \Z$ a finite domain. To prove strong transitivity we show that there is $e' \in \cyl (c,D)$ and $t \in \N$ such that $H_\theta^t (e') = e$. Let $r \in \Z_+$ be even if $c(0)=e(0)$ and odd if $c(0)\neq e(0)$, and such that $D \subseteq [-r,r]$. Let $e' \in \cyl(c,[-r,r]) \subseteq \cyl (c,D)$ be the configuration such that  
    $$
    e'(x) = 
    \left\{
    \begin{array}{ll}
    e(x+r), & \mbox{ for $x < -r$,}\\
    c(x), & \mbox{ for $x\in [-r,r]$,}\\
    e(x-r), & \mbox{ for $x > r$.}
    \end{array}
    \right.
    $$
    For all $x<0$ we have $H_\theta^r (e')(x) = e'(x-r) =  e(x)$. Similarly, for all
    $x>0$ we have $H_\theta^r (e')(x) = e'(x+r) =  e(x)$. And because $H_\theta$ toggles $r$ times the bit at cell $0$, we also have $H_\theta^r (e')(0) = e(0)$. So  $H_\theta^r (e')=e$ as required.

    The NUCA $H_\theta$ is not weakly mixing, because for any configurations $c,e \in \Sigma^\Z$ such that $c(0) = e(0)$, and any $t\in \N$, $H_\theta^t (c)(0) = H_\theta^t (e)(0)$. It is not totally transitive, because $H_\theta^2$ is not transitive. This is because for any $c\in \Sigma$, $H_\theta^2 (c)(0) = c(0)\oplus 1 \oplus 1 = c(0)$. If we have configurations $c_0,c_1 \in \Sigma^\Z$ such that $c_0(0) \neq c_1(0)$, then for all $t \in \N$, $(H_\theta^2)^t(c_0)(0)  \neq c_1(0)$ and hence $(H_\theta^2)^t(\cyl (c_0, \{0\})) \cap \cyl(c_1, \{ 0\})= \emptyset$. \qed
    
\end{example}


\section{A three state odometer on $\N$}
\label{sec:odometer}

Let $\Sigma=\{0,1,2\}$ and $X=\N\subseteq\Z$. Let $f$ be the local rule that uses the neighborhood $\{-1,0\}$ and the transition function given in Table~\ref{table:odometer1_rule1}, and let $g$ be the
local rule with the neighborhood $\{0\}$ and the transition function given in Table~\ref{table:odometer1_rule2}. Let $\theta\in \{f,g\}^\N$ be the local rule distribution $g\ f\ f\ f\dots$ on $\N$, \emph{i.e.}, cell $0$ uses the local rule
$g$ that makes the states cycle $0\mapsto 1\mapsto 2\mapsto 0\mapsto \dots$, while all other cells use the local rule $f$ that (i) keeps the state unchanged if the left neighbor is in state $0$, (ii) swaps states $0\leftrightarrow 1$ if the left neighbor is in state $1$, and (iii)  swaps states $0\leftrightarrow 2$ if the left neighbor is in state $2$. Figure~\ref{fig:odometerspacetime} shows the beginning of the space-time diagram starting from the $0$-uniform initial configuration. We call the global transition function $H=H_\theta:\{0,1,2\}^\N\longrightarrow \{0,1,2\}^\N$ the \emph{three state odometer}.

\begin{table}[h!]
\centering
\begin{tabular}{c|c|c|c}
\textbf{Cell $(i-1)$} & \textbf{Cell $i$} & \textbf{Next state of cell $i$} & 
 \\ \hline
0 & $s$ & $s$ & 
\fbox{\begin{tabular}{c}
\emph{identity}
\end{tabular}}
 \\ \hline

1 & 0 & 1 & 
\\

1 & 1 & 0 & 
\fbox{\begin{tabular}{c|c}
1 & $0 \leftrightarrow 1$
\end{tabular}} 
\\

1 & 2 & 2 & 
\\ \hline
2 & 0 & 2 &\\
2 & 1 & 1 & \fbox{\begin{tabular}{c|c}
2 & $0 \leftrightarrow 2$
\end{tabular}}\\
2 & 2 & 0 &\\
\hline 
\end{tabular}
\bigskip
\caption{The local rule at cells $i>0$ in the three state odometer.}
\label{table:odometer1_rule1}
\end{table}

\begin{table}[h!]
\centering
\begin{tabular}{c|c}
 \textbf{Cell $i$} & \textbf{Next state of cell $i$} 
 \\ \hline
0 & 1 
 \\
1 & 2  
 \\
2 & 0 
\\
\hline 
\end{tabular}
\bigskip
\caption{The local rule at cell $0$ in the three state odometer.}
\label{table:odometer1_rule2}
\end{table}

\begin{figure}
\begin{center}
	\includegraphics[scale=0.3]{"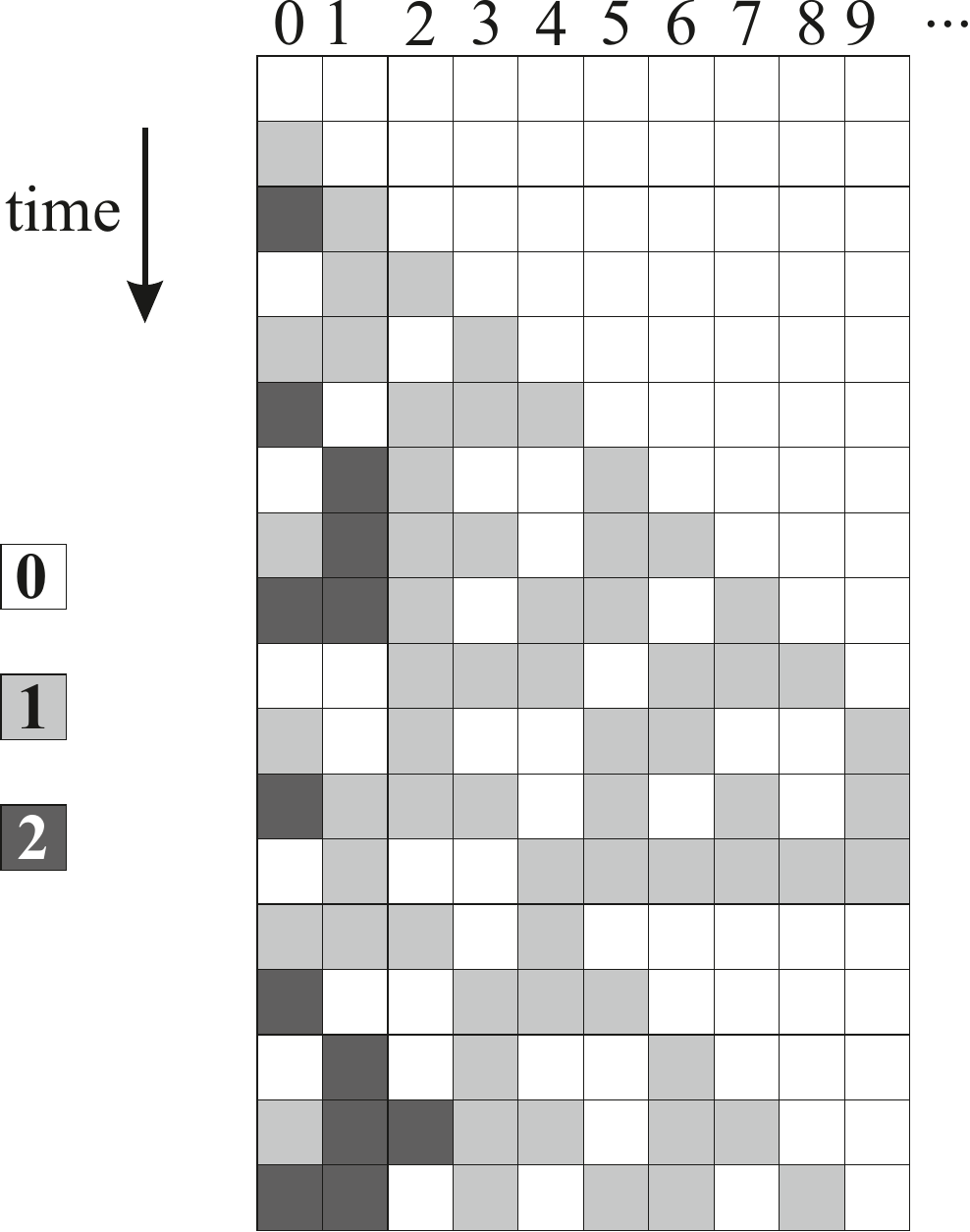"}
	\caption{The evolution of the first ten cells for 18 time steps in the three state odometer. Initially all cells are in state $0$.}
    \label{fig:odometerspacetime}
\end{center}
\end{figure}

In the following we prove that, for every $n\geq 1$, the patterns in the 
first $n$ cells cycle through all the $3^n$ words in $\{0,1,2\}^n$. This happens regardless of the states to the right, as the neighborhood of the NUCA does not involve 
any cells to the right of a cell. It follows that the three state odometer is a minimal dynamical system. From the fact that cells to the right of a cell are not among its neighbors, it directly follows that the system is equicontinuous and not sensitive. In fact, the  Auslander–Yorke dichotomy theorem states that every minimal 
system is either equicontinuous or sensitive~\cite{auslander}.


\begin{definition}
Let $o \in \Sigma^\Z$ be the uniform configuration of $0$'s (for all $x \in \N$, $o(x)=0$). For $x \in \N$, we denote $T_x:\N \rightarrow \Sigma$, where $T_x(t) = H^t(o)(x)$ for all $t \in \N$. We call $T_x$ the \emph{trace} (of $o$) at $x$. For $i,j \in \N$, $i \leq j$, we denote $T_x[i,j] = (T_x(i), \ldots, T_x(j))$, and for $a \in \Sigma$, we denote 
\begin{align*}
T_x[i,j]\#a = |\{k \in \N | i \leq k \leq j, T_x(k) = a \}|,
\end{align*}
that is, the number of occurrences of the symbol $a$ in the trace $T_x$ between time $i$ and $j$.
\end{definition}

First we show that if there are two identical sections of the trace at $x$, offset by $k$, which contain only $0$'s and $a$'s for either $a=1$ or $a=2$, and the trace at $x+1$ has 0 at the beginning of one of these sections and $a$ at the other, then if there is a symbol $a$ at one of the sections at time $t$ in the trace at $x+1$, then there is a $0$ in the other section at time $t+k$ (and vice versa). An illustration is given in Figure \ref{fig_tracecopy}.

\begin{figure}
\begin{center}
\includegraphics[scale=1.4]{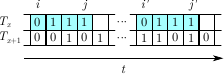}
\caption{Two identical sections of trace $T_x$ with only symbols $0$ and $1$, and a $0$ at the beginning of one section in trace $T_{x+1}$ and $1$ at the beginning of the other. This forces each cell in the sections of trace $T_{x+1}$ to have different symbols.} \label{fig_tracecopy}
\end{center}
\end{figure}

\begin{lemma} \label{odomlemma1}
Let $x\in \N$, $\{a,b\}=\{1,2\}$, and let $0\leq i\leq j$, $k > 0$, and $i' = i+k$, $j'=j+k$. Suppose $T_x[i,j]\#b=0$, $T_x[i,j] = T_x	[i',j']$ and $\{T_{x+1}(i),T_{x+1}(i')\}=\{0,a\}$. 
When $i\leq t \leq j+1$ we have $\{T_{x+1}(t),T_{x+1}(t+k)\}=\{0,a\}$. 
\end{lemma}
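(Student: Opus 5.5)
Induction on $t$ from $t=i$ up to $t=j+1$. The base case $t=i$ is exactly the hypothesis $\{T_{x+1}(i),T_{x+1}(i')\}=\{0,a\}$. For the step, assume $i\le t\le j$ and $\{T_{x+1}(t),T_{x+1}(t+k)\}=\{0,a\}$; we want the same for $t+1$. Cell $x+1>0$ uses the local rule $f$, so $T_{x+1}(t+1)$ is determined by the pair $(T_x(t),T_{x+1}(t))$, and $T_{x+1}(t+k+1)$ by $(T_x(t+k),T_{x+1}(t+k))$. Since $t\in[i,j]$, the hypothesis $T_x[i,j]=T_x[i',j']$ gives $T_x(t)=T_x(t+k)=:s$, and $T_x[i,j]\#b=0$ gives $s\in\{0,a\}$.

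We then split into two cases according to $s$. If $s=0$, rule $f$ acts as the identity on cell $x+1$ at both times, so $T_{x+1}(t+1)=T_{x+1}(t)$ and $T_{x+1}(t+k+1)=T_{x+1}(t+k)$, and the set $\{0,a\}$ is preserved. If $s=a$, rule $f$ applies the transposition $0\leftrightarrow a$ to cell $x+1$ at both times. Under this swap the pair of values $\{0,a\}$ is mapped to $\{a,0\}$, which is the same set; so again $\{T_{x+1}(t+1),T_{x+1}(t+k+1)\}=\{0,a\}$. This holds regardless of which of the two traces currently holds $0$ and which holds $a$. Note that the transposition for $a=1$ is $0\leftrightarrow1$ and for $a=2$ is $0\leftrightarrow2$; in both cases it is the one associated to the left-neighbour symbol $a$, so the two sub-cases $a=1,2$ are treated uniformly by the table.

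The step from $t$ to $t+1$ only uses $T_x(t)$ with $t\le j$, which is why the conclusion extends exactly to $t=j+1$ and not further. There is no real obstacle; the only point needing care is to check that the values of cell $x+1$ remain in $\{0,a\}$, so that the symbol $b$ (fixed by the swap $0\leftrightarrow a$) never appears and the swap indeed exchanges the two values. This is guaranteed by the inductive hypothesis.
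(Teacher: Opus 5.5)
Your proof is correct and follows essentially the same route as the paper: induction on $t$ starting from the hypothesis at $t=i$, with the step split according to whether $T_x(t)=T_x(t+k)$ is $0$ (rule $f$ leaves cell $x+1$ unchanged at both times) or $a$ (the swap $0\leftrightarrow a$ exchanges the two values, so $T_{x+1}(t+1)=T_{x+1}(t+k)$ and $T_{x+1}(t+k+1)=T_{x+1}(t)$). You also state explicitly that $T_x[i,j]\#b=0$ rules out the case $T_x(t)=b$, which the paper uses only implicitly.
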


\begin{proof}
We use mathematical induction on $t$. The claim for $t=i$ is true by
the given assumption $\{T_{x+1}(i),T_{x+1}(i')\}=\{0,a\}$. For inductive step, let
$i\leq t \leq j$, $t' = t+k$, and suppose that $\{T_{x+1}(t),T_{x+1}(t')\}=\{0,a\}$. If $T_x(t) = T_x(t') = 0$, then  by the local rule 
\begin{align*}
T_{x+1}(t+1)=T_{x+1}(t) \mbox{ and } T_{x+1}(t'+1) = T_{x+1}(t')
\end{align*}
so that  $\{T_{x+1}(t+1),T_{x+1}(t'+1)\}=\{0,a\}$, as claimed. If $T_x(t) = T_x(t') = a$
then the states $0$ and $a$ get swapped at the cell $x+1$ so that
\begin{align*}
T_{x+1}(t+1)=T_{x+1}(t') \mbox{ and } T_{x+1}(t'+1) = T_{x+1}(t).
\end{align*}
Thus we have again  $\{T_{x+1}(t+1),T_{x+1}(t'+1)\}=\{0,a\}$.
\qed
\end{proof}

\begin{lemma} \label{odomlemma2}
Let $x\in \N$, $\{a,b\}=\{1,2\}$, and suppose for some $i,j,k \in \N$, $i\leq j$, it holds that, $T_x[i,j]\# a=2k+1$ and $T_x[i,j]\#b = 0$. If $T_{x+1}(i) = 0$ then $T_{x+1}(j+1)=a$.
\end{lemma}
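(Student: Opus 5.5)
The argument is a direct parity count on how cell $x+1$ evolves during the time window $[i,j]$, using only the local rule $f$ at cell $x+1$ (note $x+1\geq 1$, so cell $x+1$ always uses $f$ with neighborhood $\{-1,0\}$).

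First I would establish the invariant that, throughout the window, the state of cell $x+1$ stays in $\{0,a\}$. Since $T_x[i,j]\#b=0$, at every time $t\in[i,j]$ the left neighbor $T_x(t)$ is either $0$ or $a$. If $T_x(t)=0$, the rule $f$ is the identity, so $T_{x+1}(t+1)=T_{x+1}(t)$. If $T_x(t)=a$, the rule $f$ applies the transposition $0\leftrightarrow a$, so $T_{x+1}(t+1)$ is obtained from $T_{x+1}(t)$ by swapping $0$ and $a$. Induction on $t$ from $t=i$, where $T_{x+1}(i)=0\in\{0,a\}$, then shows that $T_{x+1}(t)\in\{0,a\}$ for all $i\leq t\leq j+1$. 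On this two-element set the swap acts as a genuine toggle.

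Next I would count the toggles. Passing from time $i$ to time $j+1$, the state of cell $x+1$ is toggled between $0$ and $a$ exactly once for each $t\in[i,j]$ with $T_x(t)=a$, and is left unchanged at every other time. That gives exactly $T_x[i,j]\#a=2k+1$ toggles in total, which is an odd number. Starting from $T_{x+1}(i)=0$, an odd number of toggles lands at $a$, so $T_{x+1}(j+1)=a$. To make the parity step precise, the induction in the first paragraph can be strengthened to the statement that $T_{x+1}(t)=0$ if $T_x[i,t-1]\#a$ is even and $T_{x+1}(t)=a$ if it is odd.

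There is no real obstacle here. The only points needing care are that the toggle closes up on $\{0,a\}$ precisely because $b$ never appears in the left neighbor's trace during the window, and keeping the time offsets straight: the neighbor's state at time $t$ determines the cell's state at time $t+1$, which is why the window $[i,j]$ of $T_x$ governs $T_{x+1}(j+1)$. Lemma~\ref{odomlemma1} is not needed for this statement.
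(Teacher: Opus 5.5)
Your proof is correct and follows the paper's own argument: cell $x+1$ changes state exactly at the times $t\in[i,j]$ with $T_x(t)=a$, so an odd number of toggles starting from $0$ ends at $a$. Your explicit invariant $T_{x+1}(t)\in\{0,a\}$ on the window is worth keeping, since the paper's claim that $T_{x+1}(t)\neq T_{x+1}(t+1)$ if and only if $T_x(t)=a$ relies on it without stating it.
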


\begin{proof}
For each $t\in [i,j]$, by the local rule, $T_{x+1}(t) \neq T_{x+1}(t+1)$ if and only if $T_x(t) = a$. Then because there are an odd number of $a$ symbols in the trace section $T_x[i,j]$, the symbol changes between $a$ and $0$ an odd number of times in the trace section $T_{x+1}[i,j+1]$, and hence $T_{x+1}(j+1)=a$. \qed
\end{proof}

Now we are ready to prove the main result.

\begin{theorem}
The three state odometer is minimal and not sensitive. 
\end{theorem}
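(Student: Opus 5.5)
The plan is to reduce both claims to one finite statement. Since no cell's neighborhood contains cells to its right, for every $n\geq 1$ the restriction of $H^t(c)$ to $[0,n-1]$ depends only on $c_{|[0,n-1]}$. So $H$ induces a map $H_n:\{0,1,2\}^n\to\{0,1,2\}^n$. I will prove by induction on $n$ that $H_n$ is a single cyclic permutation of length $3^n$.

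Given this, both parts of the theorem follow quickly. For minimality, take any $c$ and any cylinder $\cyl(e,D)$ with $D\subseteq[0,n-1]$. Some power $H_n^t$ maps $c_{|[0,n-1]}$ to $e_{|[0,n-1]}$, so $H^t(c)\in\cyl(e,D)$ and every orbit is dense. For non-sensitivity, given a finite $E\subseteq[0,n-1]$, take $D=[0,n-1]$. If $e\in\cyl(c,D)$, then $H^r(e)\in\cyl(H^r(c),E)$ for all $r$. Thus $H$ is equicontinuous, hence not sensitive; alternatively one can quote the Auslander--Yorke dichotomy.

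For the induction, the case $n=1$ is the rule $g$, which is a $3$-cycle. Assume $H_{n}$ is a $3^n$-cycle and put $x=n-1$. Starting from $o$, the traces $T_0,\dots,T_x$ are periodic with period $p=3^n$, and every word of $\{0,1,2\}^n$ occurs exactly once in a period. In particular $T_x[0,p-1]$ contains each symbol exactly $3^{n-1}$ times, an odd number. Over one period, cell $x+1$ undergoes a composition $\pi$ of transpositions: a $(0\,1)$ for each $1$ in $T_x[0,p-1]$ and a $(0\,2)$ for each $2$. The pair $(H_n,\text{cell }x+1)$ is then a skew product, and $H_{n+1}$ is a single $3^{n+1}$-cycle iff $\pi$ is a $3$-cycle on $\{0,1,2\}$. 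Since $\pi$ is a product of $3^{n-1}+3^{n-1}$ transpositions, it is even, so it is either the identity or a $3$-cycle. The whole proof therefore reduces to showing $\pi\neq\mathrm{id}$, i.e. that $T_{x+1}(p)\neq T_{x+1}(0)=0$.

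This last step is the main obstacle, and Lemmas~\ref{odomlemma1} and~\ref{odomlemma2} are designed for it. I would strengthen the induction hypothesis with a structural description of one period of $T_x$. My candidate is this: a period splits into consecutive maximal blocks, each using only $0$ and one $a\in\{1,2\}$, with the blocks arranged so that Lemma~\ref{odomlemma2} applies on each block. Lemma~\ref{odomlemma1} would then propagate this shape from $T_x$ to $T_{x+1}$, since $T_x$ repeats over three copies of its period while $T_{x+1}$ starts them in different states. Tracking the state of cell $x+1$ at the block boundaries with Lemma~\ref{odomlemma2} should show it ends a period in state $1$ or $2$, not $0$. If the block bookkeeping proves awkward, my fallback is a direct argument: assuming $\pi=\mathrm{id}$, I would use the repetition of identical trace sections in Lemma~\ref{odomlemma1} to force the traces $T_x$ and $T_{x+1}$ to be incompatible.
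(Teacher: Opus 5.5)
Your reduction is sound. Both claims follow once each $H_n$ is a single $3^n$-cycle, the skew-product criterion is right, and the parity count does place the holonomy $\pi$ in $\{\mathrm{id},(0\,1\,2),(0\,2\,1)\}$.

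\textbf{The gap.} The remaining step, $\pi\neq\mathrm{id}$, is the whole content of the theorem, and you leave it as a plan. Counting transpositions cannot finish it, because $\pi$ depends on the \emph{order} in which the $1$'s and $2$'s occur in one period of $T_x$. If the nonzero symbols read $1\,2$, then $\pi$ is a $3$-cycle. If they read $1\,2\,1\,2\,1\,2$, then $\pi=((0\,2)(0\,1))^3=\mathrm{id}$, even though each symbol still occurs an odd number of times. Your candidate invariant does not control this. Every sequence splits into blocks over $\{0,1\}$ and $\{0,2\}$, and unless you fix how many blocks there are, in what order, and with what parities, there is nothing to propagate and nothing that determines $\pi$. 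The fallback names no mechanism.

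\textbf{The missing invariant.} The paper carries the following invariant. With $p=3^{x+1}=2\ell+1$, each period $[pi,pi+p-1]$ of $T_x$ consists of:
\begin{itemize}
\item a $0$ at time $pi$;
\item then a block $A_i$ of length $\ell$ with an odd number of $1$'s and no $2$'s;
\item then a block $B_i$ of length $\ell$ with an odd number of $2$'s and no $1$'s.
\end{itemize}
This gives $\pi=(0\,2)(0\,1)$, i.e.\ $0\mapsto1\mapsto2\mapsto0$, immediately, so your parity step becomes unnecessary.

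\textbf{Propagating it.} The real work is showing that $T_{x+1}$ has the same shape, with period $3p$ and blocks of length $p+\ell$.
\begin{itemize}
\item Start from $T_{x+1}(3pi)=0$. Lemma~\ref{odomlemma2} puts cell $x+1$ in state $1$ at the end of $A_{3i}$.
\item This $1$ survives $B_{3i}$ (only $0\leftrightarrow2$ swaps occur there) and the next leading $0$.
\item Apply Lemma~\ref{odomlemma1} to the identical sections $T_x(A_{3i})=T_x(A_{3i+1})$, entered in states $0$ and $1$. It makes cell $x+1$ on $A_{3i+1}$ the $0/1$-complement of its behaviour on $A_{3i}$.
\item Let $m$ be the number of $1$'s of cell $x+1$ on $A_{3i}$. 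The stretch from $3pi+1$ to the end of $A_{3i+1}$ then has $m+(\ell+1)+(\ell-m)=p$ ones, no $2$'s, and is followed by state $0$.
\item The symmetric argument on the rest of the triple period gives $p$ twos, no $1$'s, and state $0$ again at time $3p(i+1)$.
\end{itemize}
With this supplied, your skew-product framing finishes more cleanly than the paper's closing case analysis. It gives density of every orbit directly, rather than only of $\mathrm{orb}(o)$ followed by an appeal to equicontinuity.
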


\begin{proof}
Since  cells can only be affected by finitely many cells to their left, the three state odometer is equicontinuous and not sensitive. To prove minimality we show, using mathematical induction on $x$, that for any $x \in \N$ and any $w \in \Sigma^{[0,x]}$, there is $t\in \N$ such that $H^t (o)_{|[0,x]}=w$. This implies that 
 the orbit $\mathrm{orb}(o)$ of $o$ is dense. It then follows from equicontinuity that
 the orbits $\mathrm{orb}(c)$ of all configurations $c$ are dense, thus proving minimality of the system~\cite{auslander}. In fact, for all configurations $c$ the sequence of patterns 
 $(H^t (c)_{|[0,x]})_{t\geq 0}$ is a periodic sequence containing all patterns $w \in \Sigma^{[0,x]}$.

The idea of the proof is that if the period of a trace at $x$ can be divided into three parts, a leading $0$, a block of only $0$'s and $1$'s, and another block of the same length of only $0$'s and $2$'s, then this forces the trace at $x+1$ a three times longer period that can be divided in the same way. 

Suppose for some $n,k \in \Z_+$, the trace $T_x$ has period $p=2n+1=3^{x+1}$, where for all $i \in \N$, $T_x(pi)= 0$, and if we denote
\begin{align*}
&\alpha_i = pi+1, &&\alpha'_i = pi+n, &&&A_i = [\alpha_i , \alpha'_i], &&&&T_{x'}(A_i) = T_{x'}[\alpha_i , \alpha'_i]\\
&\beta_i = pi+n+1, &&\beta'_i = pi+2n, &&&B_i = [\beta_i , \beta'_i], &&&&T_{x'}(B_i) = T_ {x'}[\beta_i , \beta'_i],
\end{align*}
where $x' \in \N$, then $T_x(A_i)\#1=T_x(B_i)\#2 = 2k+1$ and $T_x(A_i)\#2=T_x(B_i)\#1 = 0$. 
That is, each period begins with a $0$, then the next $n$ symbols contain $2k+1$ occurences of $1$ and no $2$'s, and the last $n$ contain $2k+1$ occurences of $2$ and no $1$'s. This is clearly true when $x=0$ (in this case $n=1$ and $k=0$). We call $A_i$ and $B_i$ a \emph{$1$-block} and a \emph{$2$-block} respectively. This is illustrated in Figure \ref{fig_periodstructure}.

\begin{figure}
\begin{center}
\includegraphics[scale=1.4]{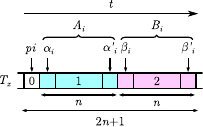}
\caption{Structure and labeling scheme for a period of the trace $T_x$.}\label{fig_periodstructure}
\end{center}
\end{figure}

We show that then the trace $T_{x+1}$ has period $3p$ where for all $i \in \N$, $T_{x+1}(3pi)=0$ and 
\begin{align*}
&T_{x+1}[\alpha_{3i},\alpha'_{3i+1}]\#1=T_{x+1}[\beta_{3i+1},\beta'_{3i+2}]\#2 = 2n+1,\\
&T_{x+1}[\alpha_{3i},\alpha'_{3i+1}]\#2=T_{x+1}[\beta_{3i+1},\beta'_{3i+2}]\#1 = 0.
\end{align*}

Suppose for some $i \in \N$, $T_{x+1}(3pi)=0$ (and hence $T_x(3pi)=0$). This is obviously true when $i=0$. Then $T_{x+1}(3pi+1) = 0$ and because $T_x(A_{3i})$ is a $1$-block, containing $2k+1$ occurences of symbol $1$, by Lemma \ref{odomlemma2}, we have $T_{x+1}(\beta_{3i}) = 1$. Let $m=T_{x+1}(A_{3i})\#1$. Now, as a $2$-block, the number of $1$'s $T_x(B_{3i}) \# 1 = 0$, meaning $T_{x+1}(\beta_{3i}+t)=1$ when $0\leq t \leq n$, giving $T_{x+1}(B_{3i})\#1 = n$. 

Now at the beginning of the next period in the trace at $x$ we have $T_{x+1}(\beta'_{3i}+1) = T_{x+1}(3pi+p) = 1$, and because $T_x(3pi+p)=0$, the local rule gives $T_{x+1}(\alpha_{3i+1})=1$ for the beginning of the next $1$-block. Then because the trace sections $T_x(A_{3i+1}) = T_x(A_{3i})$ are $1$-blocks, by Lemma \ref{odomlemma1}, $T_{x+1}(\alpha_{3i+1}+t)=0$ when $T_{x+1}(\alpha_{3i}+t)=1$ and $T_{x+1}(\alpha_{3i+1}+t)=1$ when $T_{x+1}(\alpha_{3i}+t)=0$, where $0 \leq t \leq n$. Therefore $T_{x+1}(A_{3i+1})\#1 = n-m$, and $T_{x+1}(\beta_{3i+1})=0$. 

Now the total number of $1$'s in the trace section is
\begin{align*}
T_{x+1}[\alpha_{3i},\alpha'_{3i+1}]\#1 = &m + n + 1 + (n-m) = 2n+1 
\end{align*}
and $T_{x+1}[\alpha_{3i},\alpha'_{3i+1}]\#2 = 0$. Because $T_{x+1}(\beta_{3i+1})=0$, with a simple transformation of this argument, we can also show that $T_{x+1}[\beta_{3i+1},\beta'_{3i+2}]\#2=2n+1$, $T_{x+1}[\beta_{3i+1},\beta'_{3i+2}]\#1=0$ and $T_{x+1}(3pi + 3p)= 0$. Therefore the statement holds. 

\begin{figure}
\begin{center}
\includegraphics[scale=1.4]{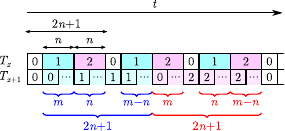}
\caption{Illustration of the argument for why the period in trace $T_x$ forces the trace $T_{x+1}$ to have a similar but longer period. In the trace $T_x$ the coloured blocks labeled with $1$ and $2$ are $1$-blocks and $2$-blocks respectively. The numbers below in blue and red (left and right) are the number of $1$ and $2$ symbols in the denoted section of trace $T_{x+1}$  respectively.} \label{fig_odometerind}
\end{center}
\end{figure}

Then because $T_0$ has a period of this kind of length $3$, every trace $T_x$ has such a period of length $3^{x+1}$. Now, suppose for some $x\in \N$, for every $u \in \Sigma^{[0,x]}$ there is some $t\in \N$ such that $H^t (o)_{|[0,x]}=u$. This is clearly true when $x=0$. Let $w \in \Sigma^{[x+1]}$, and let $u \in \Sigma^{[0,x]}$ be such that for some $a \in \Sigma$, the word $w = ua$. Let $p,n\in \N$ be such that $p=2n+1$ is the period of trace $T_x$. Now by the previous, for some $0 \leq t < p$, $H^{t+ip} (c)_{|[0,x]}=u$ for all $i \in \N$. Now there are three cases depending on where in the period of $t$ is.

\emph{Case 1:} $t=0$. Then by the previous there is $i \in \N$ such that $H^{pi}(x+1) = 0$, $H^{pi+p}(x+1) = 1$ and $H^{pi+2p}(x+1) = 2$. Therefore the statement holds.

\emph{Case 2:} $1 \leq t \leq n$. Then for some $i\in \N$, $H^{t+pi} (x+1) \in \{0,1\}$. Then because $t$ is part of a $1$-block in the trace at $x$, by the previous $H^{t+pi+p} (x) \in \{0,1\}$ and $H^{t+pi+p} (x+1) \neq H^{t+pi} (x+1)$ and $H^{t+pi+2p} (x+1) = 2$. Hence the statement holds. 

\emph{Case 3:} $n+1 \leq t \leq 2n$. Then for some $i\in \N$, $H^{t+pi} (x+1) \in \{0,2\}$. Then because $t$ is part of a $2$-block in the trace at $x$, by the previous $H^{t+pi+p} (x) \in \{0,2\}$ and $H^{t+pi+p} (x+1) \neq H^{t+pi} (x+1)$ and $H^{t+pi+2p} (x+1) = 1$. Hence the statement holds.

Therefore, the NUCA $H$ is minimal. Since every cell's neighbourhood only includes the cell itself and (at most) a cell to its left, and since any cell only has finitely many cells to it's left, the NUCA is obviously not sensitive. 
\qed 
\end{proof}

By the preceding, the three state odometer $H$ clearly has no periodic points. If we extend its rule distribution to $\Z$ such that the cells in $\Z \setminus \N$ use the right shift rule $f_\rightarrow$ of Example~\ref{ex:firstexample}, we obtain a NUCA on $\Z$ which is transitive, but has no periodic points. This is in contrast to some previous examples of NUCA with no periodic points, which have been non-transitive \cite{KamilyaK21}.

\section{A minimal, non-sensitive two-dimensional NUCA}

Embedding the three state odometer along a snake that covers $\Z^2$ we obtain a  two-dimensional NUCA that is minimal, equicontinuous and non-sensitive.

\begin{theorem}
There exists a $2$-dimensional NUCA that is minimal -- and hence also transitive -- but not sensitive.
\end{theorem}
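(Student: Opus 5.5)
We will embed the three state odometer of the previous section into $\Z^2$ along a \emph{snake}, i.e.\ a bijection $\pi:\N\to\Z^2$ such that consecutive points $\pi(i)$ and $\pi(i+1)$ are neighbours in the sense $\pi(i+1)-\pi(i)\in\{(\pm1,0),(0,\pm1)\}$. A concrete choice is the square spiral: $\pi(0)=\vec 0$, and then the path winds outward around the origin, covering the boundary of each square $[-r,r]^2$ in turn. With $\Sigma=\{0,1,2\}$, the rule set contains the toggle-like rule $g$ (neighbourhood $\{\vec 0\}$, cycling $0\mapsto1\mapsto2\mapsto0$) and four versions $f_{\vec v}$ of the odometer rule $f$, one for each unit vector $\vec v$. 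The rule $f_{\vec v}$ has neighbourhood $\{\vec v,\vec 0\}$ and applies the transition of Table~\ref{table:odometer1_rule1}, with the cell at offset $\vec v$ playing the role of the left neighbour. We set $\theta(\pi(0))=g$ and $\theta(\pi(i))=f_{\vec v}$ with $\vec v=\pi(i-1)-\pi(i)$ for $i\ge1$. The rule set is then finite (five rules), so this is a genuine NUCA on $\Z^2$.

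The key step is a conjugacy. Define $\Phi:\Sigma^{\Z^2}\to\Sigma^{\N}$ by $\Phi(c)(i)=c(\pi(i))$. Since $\pi$ is a bijection, $\Phi$ is a homeomorphism: preimages of cylinders on finite sets are cylinders on finite sets, and $\Phi$ is a continuous bijection of compact metric spaces. By construction, the new state of $\pi(i)$ depends only on the states at $\pi(i)$ and $\pi(i-1)$, exactly as in the odometer. Hence $\Phi\circ H_\theta=H\circ\Phi$, where $H$ is the three state odometer.

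Minimality and non-sensitivity are both invariant under topological conjugacy, since they are defined purely topologically: dense orbits, and the equivalent metric formulation of sensitivity/equicontinuity on compact spaces. The previous theorem gives that $H$ is minimal and equicontinuous, so $H_\theta$ is minimal and equicontinuous, hence not sensitive. Minimality implies transitivity: for open $U,V$, choose $c\in U$; the orbit of $c$ meets $V$, so $A_{U,V}\neq\emptyset$.

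The only point needing care is that the finite-domain definitions of sensitivity and equicontinuity in the paper transfer through $\Phi$, which matters for the claim of non-sensitivity. I would argue this directly. Given a finite $E\subseteq\Z^2$, let $E'=\pi^{-1}(E)$, which is finite. Odometer equicontinuity gives a finite $D'\subseteq\N$, and we take $D=\pi(D')$. Concretely, $D'=[0,\max E']$ works, because each cell depends only on cells earlier along the snake. Since agreement on $D$ propagates forever to agreement on $E$, $H_\theta$ is equicontinuous. Equicontinuity in turn excludes sensitivity: for the domain $E$ in the sensitivity definition, the corresponding $D$ prevents any $e\in\cyl(c,D)$ from ever separating from $c$ on $E$.
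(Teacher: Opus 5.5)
Your proposal is correct and matches the paper's proof: both use the spiral embedding of the three state odometer, with $g$ at the origin and four oriented copies of $f$, and both get equicontinuity (hence non-sensitivity) from the fact that each cell along the snake depends only on the cells before it. The only difference is presentational: you make the conjugacy $\Phi$ to the odometer explicit, whereas the paper simply notes that the odometer's property of cycling through all words on the first $n$ cells carries over to the cells $s(0),\dots,s(n-1)$.
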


\begin{proof}
There clearly exists a bijection $s:\N\longrightarrow \Z^2$ with the following properties:
\begin{itemize}
\item $s(0)=(0,0)$,
\item for every $k>0$ we have $s(k)-s(k-1)\in\{(\pm1,0),(0,\pm 1)\}$.
\end{itemize}
An example of such an enumeration of $\Z^2$ is the spiral shown in Figure~\ref{fig:spiral}(a). 

\begin{figure}
\begin{center}
\includegraphics[scale=0.3]{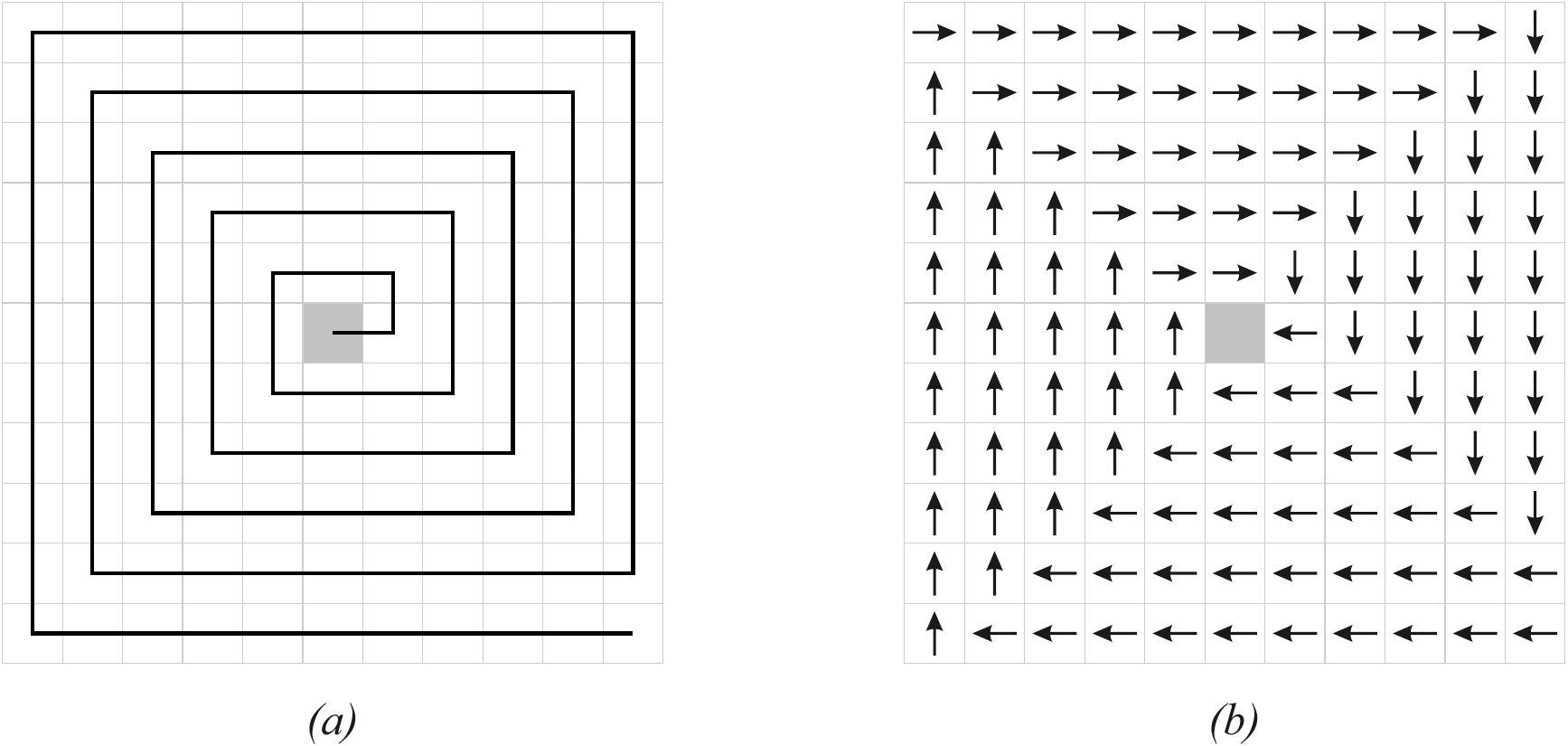}
\caption{(a) A spiral ordering $s:\N\longrightarrow \Z^2$. The cell marked grey is the origin $s(0)=(0,0)$. For every $k$, the $k$'th cell along the spiral is the cell $s(k)$. (b) The rule distribution $\theta$ of the NUCA: The grey cell uses the local rule $g$, while cells with an arrow use $f$ with the neighbor in the direction of the arrow.}\label{fig:spiral}
\end{center}
\end{figure}

Our NUCA uses five local rules: rule $g$ of Table~\ref{table:odometer1_rule1}
at cell $(0,0)$, and for every $k>0$ the rule $f$ of Table~\ref{table:odometer1_rule2}
at the cell $s(k)$ with its neighbor $s(k-1)$. More precisely, in Table~\ref{table:odometer1_rule2} cell $i$ refers to $s(k)$ and cell $(i-1)$ to $s(k-1)$. Thus there are four local rules that use $f$, in four different orientations. Figure~\ref{fig:spiral}(b) shows the rule distribution $\theta$.

Because the cell $s(k)$ is only affected by the finitely many 
cells $s(0),\dots , s(k)$, the NUCA we constructed is equicontinuous and non-sensitive. 
As the three state odometer has the property that, for 
any positive integer $n$, the pattern in the first $n$ cells cycle through all
words of length $n$, regardless of the initial configuration, the two-dimensional NUCA we constructed have this same property for cells $s(0),\dots , s(n-1)$. It follows that
the NUCA is minimal. \qed
\end{proof}

\section{Recurrent rule distributions}

We have seen that there are transitive NUCAs that are not necessarily sensitive. However, if the rule distribution is recurrent then transitivity implies sensitivity.

\begin{theorem}
Let $\theta \in \mathcal{R}^{\Z^d}$ be a recurrent rule distribution such that $H_\theta: \Sigma^{\Z^d} \rightarrow \Sigma^{\Z^d}$ is transitive. Then $H_\theta$ is sensitive.
\end{theorem}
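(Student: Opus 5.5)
The plan is to argue by contraposition, in the spirit of the uniform case~\cite{CodenottiMargara96}. Assume $H_\theta$ is transitive but not sensitive. By the Auslander–Yorke dichotomy~\cite{auslander}, a transitive non-sensitive system has an equicontinuity point $c$. So for every finite $E\subseteq\Z^d$ there is a finite $D$ such that the cylinder $U=\cyl(c,D)$ \emph{forces} $E$: for all $e\in U$ and all $r\in\N$ we have $H_\theta^r(e)_{|E}=H_\theta^r(c)_{|E}=:w_r$.

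I want a forcing cylinder that also forces its own defining domain, i.e.\ $D\subseteq E$. I expect this to be the main obstacle, because $D$ is chosen after $E$. My first attempt is to fix a nonempty $E_0$, let $D_0$ force $E_0$, and then apply equicontinuity at $c$ with $E\supseteq E_0\cup D_0$. I would then use that $c$ is a transitive point (equicontinuity points of a transitive system are transitive points~\cite{auslander}) to return into $\cyl(c,D_0)$ and bootstrap. If this nesting cannot be closed directly, I will look for a cylinder $U$ and a finite set $E\supseteq D$ forced by $U$ by iterating the choice, which is the step needing the most care. Assuming $D\subseteq E$, transitivity gives a time $p>0$ with $H_\theta^p(U)\cap U\neq\emptyset$. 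To make $p$ positive, go from $U$ to some $\cyl(q,D)$ with $q\ne c_{|D}$ and then back to $U$. Since the $D$-pattern at time $p$ is forced, it equals $c_{|D}$, so $H_\theta^p(U)\subseteq U$, and the sequence $(w_r)$ is $p$-periodic. By transitivity $(w_r)$ is non-constant, so there are $t_1,t_2$ with $w_{t_1}\neq w_{t_2}$.

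Next comes the translation step, which uses recurrence. The facts ``$H_\theta^p(U)\subseteq U$'' and ``$H_\theta^t(e)_{|E}=w_t$ for $t<p$, $e\in U$'' only involve the rule distribution on the finite window $W=(D\cup E)+[-pR,pR]^d$, where $R$ bounds the neighborhood radii. Recurrence of $\theta$ gives $\vec{x}\neq\vec 0$ with $\sigma_{\vec{x}}(\theta)_{|W}=\theta_{|W}$. I will take the window large and, if needed, replace $\vec x$ by a multiple $j\vec x$ that still matches $\theta$ on the smaller window, so that $D\cup E$ and $D\cup E+\vec{x}$ are disjoint. Then $U_{\vec x}=\cyl(\sigma_{-\vec x}(c),D+\vec x)$ satisfies $H_\theta^p(U_{\vec x})\subseteq U_{\vec x}$, and for $t<p$ it forces the $(E+\vec x)$-pattern to be the translate of $w_t$. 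By induction over blocks of length $p$, this forcing holds for all times.

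For the contradiction, $U\cap U_{\vec x}$ is a nonempty open set, since the domains are disjoint. For $e$ in it, at every time $t$ the $E$-pattern is $w_t$ and the $(E+\vec x)$-pattern is the translate of the same $w_t$. Let $V$ be the cylinder on $E\cup(E+\vec x)$ with $w_{t_1}$ on $E$ and the translate of $w_{t_2}$ on $E+\vec x$. Transitivity requires some $t$ with $w_t=w_{t_1}$ and $w_t=w_{t_2}$ simultaneously, which is impossible. Hence $H_\theta$ is sensitive.
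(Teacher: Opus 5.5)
\textbf{Where the proof breaks.} The gap is the step you flag and leave open: producing a cylinder $U=\cyl(c,D)$ that forces some set $E\supseteq D$. Everything after it depends on this step. The only finite certificate of all-time forcing in your argument is ``$H_\theta^p(U)\subseteq U$ together with forcing for $t<p$''. Equicontinuity at $c$ on its own is an infinite-time property, and recurrence of $\theta$ on a finite window cannot transport it.

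Your bootstrap does not close. Suppose $\cyl(c,D_1)$ forces $E_0\cup D_0$ and $H_\theta^p(c)\in\cyl(c,D_1)$. Then you do get $H_\theta^p(\cyl(c,D_1))\subseteq\cyl(c,D_0)$, and the $(E_0\cup D_0)$-patterns of $c$ are $p$-periodic. But to iterate on all of $\cyl(c,D_1)$ you need $D_1$ itself to be forced. That calls for a new $D_2$, and so on. Nothing in the sketch makes $D_k\subseteq E_k$ at some stage, since equicontinuity only gives forcing with a margin. As sketched, this step also makes no use of recurrence, so it would have to hold for every transitive non-sensitive NUCA, and you give no argument for that.

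\textbf{The missing idea.} Use transitivity a second time to make the periodicity \emph{global}, so that the certificate becomes a statement about all configurations.
\begin{enumerate}
\item Take $E=\{\vec 0\}$. Non-sensitivity gives a cylinder $\cyl(c,D)$ on which the trace at $\vec 0$ equals that of $c$.
\item A return of $\cyl(c,D)$ into itself at some time $n>0$ (your detour trick supplies $n>0$) makes this trace $n$-periodic.
\item For $t\le n$, $H_\theta^t(e)(\vec 0)$ depends only on $e_{|[-nR,nR]^d}$, where $R$ is your radius bound. By transitivity some $H_\theta^k(\cyl(c,D))$ meets $\cyl(e,[-nR,nR]^d)$. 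So the first $n+1$ trace symbols of an arbitrary $e$ form a window of that periodic trace, which gives $H_\theta^n(e)(\vec 0)=e(\vec 0)$ for every configuration $e$.
\item This identity depends only on $\theta_{|[-nR,nR]^d}$, so recurrence copies it to some cell $\vec m\neq\vec 0$.
\item Configurations carrying the same pattern on the two windows around $\vec 0$ and $\vec m$ then have equal traces at $\vec 0$ and $\vec m$ for all times. This contradicts transitivity exactly as in your last paragraph.
\end{enumerate}
This is the paper's route. It also makes your appeal to the transitive version of the Auslander--Yorke dichotomy unnecessary; the paper cites that result only for minimal systems.
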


\begin{proof}
Suppose $H_\theta$ is not sensitive. Let us first prove that, for some $n>0$,  the traces of all configurations at cell $\vec{0}$ have period $n$.

Because $H_\theta$ is not sensitive, there is some $D \subseteq \Z^d$ and $c \in \Sigma^{\Z^d}$ such that for any $e \in \cyl (c,D)$ and $t \geq 0$, $H_\theta^t(e)(\vec{0}) = H_\theta^t(c)(\vec{0})$. Because $H_\theta$ is transitive, there is some $e \in \cyl(c,D)$ and $n>0$ such that $H_\theta^n (e) \in \cyl(c,D)$. Then for any $t \geq 0$, 
\begin{align*}
    H_\theta^t(c)(\vec{0}) = H_\theta^t(H_\theta^n(e))(\vec{0}) = H_\theta^{n+t}(e)(\vec{0}) = H_\theta^{n+t}(c)(\vec{0}),
\end{align*}
meaning that the trace $T = (H_\theta^t (c)(\vec{0}))_{t\geq 0}$ of $c$ at cell $\vec{0}$ has period $n$.

Consider then an arbitrary configuration $e\in \Sigma^{\Z^d}$, and denote  
$a_t = H_\theta^t (e)(\vec{0})$ for all $t\geq 0$.
Let $r>0$ be such that the rules in $\R$ have at most radius $r$. Obviously for any $e' \in \cyl (e,[-nr,nr]^d)$, we have $H_\theta^{t} (e')(\vec{0}) = a_{t}$ when $0 \leq t\leq n$. Because $H_\theta$ is transitive, there is some $k\geq 0$ such that 
$H_\theta^k (\cyl(c,D)) \cap \cyl(e,[-nr,nr]^d) \neq \emptyset$. Then $a_0, \ldots, a_n$ is part of the trace $T$ starting at time $k$, that is, $a_{t}=T_{k+t}$ when $0 \leq t \leq n$. Because $T$ has period $n$, we have $a_0 = a_n$. As $e$ was arbitrary, it follows that the trace $(H_\theta^t (e)(\vec{0}))_{t\geq 0}$ of every configuration $e$ at cell $\vec{0}$ has period $n$.

\begin{figure}
\begin{center}
	\includegraphics[scale=1]{"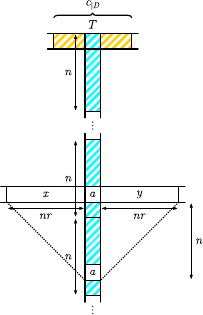"}
	\caption{Illustration, in dimension $d=1$, of the argument for why any cell at the centre of a copy of the rule patters $\theta_{|[-nr,nr]^d}$ must repeat in the trace after $n$ steps. Any configuration that agrees with $c$ within $D$ will have a trace of period $n$ at cell $\vec{0}$. Then for any pattern $xay$ extending symbol $a$ by $nr$ cells in all directions, after some time some configuration in $\cyl (c,D)$ will have $xay$ at the origin. Any configuration with a copy of $xay$ on a copy of the rule pattern $\theta_{|[-nr,nr]^d}$ will have the same trace beginning from the $a$-cell for the next $n$ steps. Because that section of the trace must be a part of the period $n$ trace at the origin, the symbol $a$ must recur after $n$ steps.}
\end{center}
\end{figure}

Because $\theta$ is recurrent, there is $\vec{m}\neq\vec{0}$ such that 
$\sigma_{\vec{m}}(\theta)_{|[-nr,+nr]^d} = \theta_{|[-nr,+nr]^d}$. 
Clearly when $0\leq t \leq n$, for any $e \in \Sigma^{\Z^d}$, it holds that  $H_\theta^{t} (e)(\vec{m}) = H_\theta^{t} (\sigma_{\vec{m}}(e))(\vec{0})$. In particular, for all $e \in \Sigma^{\Z^d}$ and all $t\in\N$ 
\begin{align*}H_\theta^{t+n} (e)(\vec{m}) &=
H_\theta^n(H_\theta^{t} (e))(\vec{m}) \\&=
H_\theta^{n} (\sigma_{\vec{m}}(H_\theta^{t} (e)))(\vec{0}) \\&=
H_\theta^{0} (\sigma_{\vec{m}}(H_\theta^{t} (e)))(\vec{0}) \\&=
H_\theta^{0} (H_\theta^{t} (e))(\vec{m}) \\&=
H_\theta^{t} (e)(\vec{m})
\end{align*}
implying that also at cell $\vec{m}$ the trace 
$(H_\theta^t (e)(\vec{m}))_{t\geq 0}$ of every configuration $e$ has period $n$.

But this contradicts transitivity: let $c$ be a uniform configuration with the same state at 
all cells. 
Let $D'=[-nr,nr]^d\cup (\vec{m}+[-nr,+nr]^d)$, and consider any $e \in \cyl (c,D')$. Then 
$\sigma_{\vec{m}}(e)_{|[-nr,nr]^d} =e_{|[-nr,nr]^d}$ and so 
$H_\theta^{t} (e)(\vec{m})=H_\theta^{t} (e)(\vec{0})$ when $0\leq t \leq n$. Because the traces
of $e$ at cells $\vec{m}$ and $\vec{0}$ both have period $n$, we have that 
$H_\theta^{t} (e)(\vec{m})=H_\theta^{t} (e)(\vec{0})$ for all $t\geq 0$. Because 
$e \in \cyl (c,D')$ is arbitrary, this means that for all $t\geq 0$ the intersection of
$H_\theta^t(\cyl (c,D'))$ and $\cyl (c',\{\vec{0},\vec{m}\})$ is empty, where $c'$ is a configuration such that $c'(\vec{0})\neq c'(\vec{m})$.

\qed
\end{proof}

\section{Conclusions}

In uniform cellular automata, transitivity implies sensitivity. Additionally, transitivity, syndetic transitivity, total transitivity and weak mixing are mutually equivalent, and both mixing and strong transitivity imply transitivity. However, it is open whether transitivity or strong transitivity implies mixing. 
A simple counter-example (Example~\ref{ex:firstexample}) shows that there are strongly transitive (and hence transitive) non-uniform cellular automata, which are not mixing, weakly mixing, or totally transitive. 

We've shown that if a NUCA is defined by a recurrent rule distribution, it is sensitive if it is transitive. We've also shown an example of a minimal (and hence transitive), non-sensitive NUCA on $\N$ (as a subset of $\Z$), and a minimal, non-sensitive two-dimensional NUCA, which can easily be extended to higher dimensions. It remains open whether there exists a transitive and non-sensitive NUCA on $\Z$. It is also open whether all transitive NUCA are syndetically transitive.

We finish by proposing another candidate for a three state odometer on $\N$, similar to the one described in Section~\ref{sec:odometer} but with a different local rule in cells $i>0$. Let $\Sigma = \{0,1,2 \}$ and let $h$ be the local rule with neighborhood $\{ -1,0\}$ that uses the transition function given in Table \ref{table:odometer2_rule2}. Let $g$ be, as in Section~\ref{sec:odometer}, the three periodic rule in Table~\ref{table:odometer1_rule2}. 
 Let $\phi \in \{h,g\}^\N$ be the local rule distribution $g\ h\ h\ h \ldots$ on $\N$. The NUCA $H_\phi$ appears to exhibit similar behavior to the three state odometer described previously, though we have not proved this. Notably, because $h$ is identical to $g$ when the left neighbor is in the fixed state $0$, the NUCA $H_\phi$ can also be defined using the uniform rule distribution $h\ h\ h \ldots$ on $\N$ with a constant boundary condition, where the left neighbor of cell $0$ is fixed to state $0$.

\begin{table}[!h] 
    \centering
    \begin{tabular}{c|c|c|c}
         \textbf{Cell $(i-1)$} & \textbf{Cell $i$} & \textbf{Next state of cell $i$} & 
 \\ \hline
        0 & 0 & 1 & \\
        0 & 1 & 2 & 
        \fbox{\begin{tabular}{c}
           $0 \rightarrow 1 \rightarrow 2 \rightarrow 0$
        \end{tabular}}
        \\
        0 & 2 & 0 & \\ 
        \hline
        
        1 & 0 & 1 & \\
        1 & 1 & 0 & 
        \fbox{\begin{tabular}{c}
           $0 \leftrightarrow 1$
        \end{tabular}}
        \\
        1 & 2 & 2 & \\
        \hline
        
        2 & 0 & 2 & \\
        2 & 1 & 1 &
        \fbox{\begin{tabular}{c}
           $0 \leftrightarrow 2$
        \end{tabular}}
        \\
        2 & 2 & 0 & \\
        \hline
    \end{tabular}
    \caption{The local rule at cells $i>0$ in the second candidate for a three state odometer.}
    \label{table:odometer2_rule2}
\end{table}

\bibliographystyle{splncs04}
\bibliography{biblio}

@article{KamilyaK21,
  author       = {Supreeti Kamilya and
                  Jarkko Kari},
  title        = {Nilpotency and periodic points in non-uniform cellular automata},
  journal      = {Acta Informatica},
  volume       = {58},
  number       = {4},
  pages        = {319--333},
  year         = {2021}
}

@article{PaturiNACO,
  author       = {Katariina Paturi and Jarkko Kari},
  title        = {On the surjunctivity and the {G}arden of {E}den theorem for non-uniform cellular automata},
  journal      = {Natural Computing},
  volume       = {25},
  number       = {9},
  year         = {2026}
}

@InProceedings{PaturiUCNC,
  author =	{Katariina Paturi},
  title =	{{Reversibility, balance and expansitivity of non-uniform cellular automata}},
  booktitle =	{22nd International Conference on Unconventional Computation and Natural Computation (UCNC 2025)},
  pages =	{17--32},
  series =	{Lecture Notes in Computer Science (LNCS)},
  year =	{2026},
  volume =	{16364}
}

@article{CodenottiMargara96,
author = {Bruno Codenotti and Luciano Margara},
title = {Transitive Cellular Automata Are Sensitive},
journal = {The American Mathematical Monthly},
volume = {103},
number = {1},
pages = {58--62},
year = {1996}
}

@article{Kurka1997, 
title={Languages, equicontinuity and attractors in cellular automata}, 
volume={17}, 
number={2},
journal={Ergodic Theory and Dynamical Systems},
author={K\r{u}rka, Petr}, 
pages={417–433},
year={1997}
}

@article{Myhill1963,
  title={{The converse of Moore's Garden-of-Eden theorem}},
  author={Myhill, John},
  journal={Proceedings of the American Mathematical Society},
  volume={14},
  number={4},
  pages={685--686},
  year={1963}
}

@article{Moore1962,
  title={Machine models of self-reproduction},
  author={Moore, Edward F},
  journal={Proceedings of Symposia in Applied Mathematics},
  volume={14},
  pages={17--33},
  year={1962}
}

@article{auslander,
author = {Joseph Auslander and James A. Yorke},
title = {{Interval maps, factors of maps, and chaos}},
volume = {32},
journal = {Tohoku Mathematical Journal},
number = {2},
pages = {177 -- 188},
year = {1980}
}

@article{Moothathu2005,
title = {Homogeneity of surjective cellular automata},
journal = {Discrete and Continuous Dynamical Systems},
volume = {13},
number = {1},
pages = {195-202},
year = {2005},
author = {T.K. Subrahmonian Moothathu},
}
\end{document}